\def\@makefntext{\leavevmode\llap{\@makefnmark\kern 3pt}}
\let\citationorig\citation
\def\citation#1{\citationorig{#1}\@for\@tempa:=#1\do{\@ifundefined{cit@\@tempa}{\global\@namedef{cit@\@tempa}{}}{}}}
\let\bibitemorig\bibitem
\def\bibitem#1{\@ifundefined{cit@#1}{\typeout{LaTeX Warning: Unused bibitem `#1'}}{}\bibitemorig{#1}}
\let\old@setaddresses\@setaddresses
\def\@setaddresses{\bigskip{\parindent 0pt\let\scshape\relax\let\ttfamily\relax\old@setaddresses}}
\def\periodsf{\spacefactor 3000 \space}
\newtheorem*{mainthm}{Theorem}
\newtheorem{lemma}{Lemma}
\renewenvironment{enumerate}{\begin{enumorig}[label=\textup{(\arabic*)}, noitemsep, topsep=1.5mm plus 1.5mm, leftmargin=*]}{\end{enumorig}}
\renewenvironment{itemize}{\begin{itemorig}[label=\textbullet, noitemsep, topsep=1.5mm plus 1.5mm, labelsep=.6em, labelindent=.2em, leftmargin=*]}{\end{itemorig}}
\let\int\undefined
\DeclareMathOperator{\int}{int}
\DeclareMathOperator{\ext}{ext}
\def\famF{\mathcal{F}}
\def\famG{\mathcal{G}}
\def\famH{\mathcal{H}}
\def\famS{\mathcal{S}}
\def\famU{\mathcal{U}}
\def\setN{\mathbb{N}}
\def\setR{\mathbb{R}}
\let\leq\leqslant
\let\geq\geqslant
\let\setminus\smallsetminus
\let\Omega\varOmega
\let\Theta\varTheta
\title[Outerstring graphs are $\chi$-bounded]{\boldmath Outerstring graphs are $\chi$-bounded}
\author{Alexandre Rok\and Bartosz Walczak}
\address[Alexandre Rok]{Department of Mathematics, Ben-Gurion University of the Negev, Be'er Sheva 84105, Israel}
\email{\href{mailto:roky3090@gmail.com}{roky3090@gmail.com}}
\address[Bartosz Walczak]{Department of Theoretical Computer Science, Faculty of Mathematics and Computer Science, Jagiellonian University, Kraków, Poland}
\email{\href{mailto:walczak@tcs.uj.edu.pl}{walczak@tcs.uj.edu.pl}}
\thanks{A preliminary version of this paper appeared in: \href{https://doi.org/10.1145/2582112.2582115}{Siu-Wing Cheng and Olivier Devillers (eds.), \emph{30th Annual Symposium on Computational Geometry (SoCG 2014)}, pp.~136--143, ACM, New York, 2014}.}
\thanks{Alexandre Rok was partially supported by Swiss National Science Foundation grants 200020-144531 and 200021-137574 and by Israel Science Foundation grant 1136/12.
Bartosz Walczak was partially supported by Swiss National Science Foundation grant 200020-144531, by Ministry of Science and Higher Education of Poland grant 884/N-ESF-EuroGIGA/10/2011/0 within ESF EuroGIGA project GraDR, and by National Science Center of Poland grant 2015/17/D/ST1/00585.}
\begin{document}

\begin{abstract}
An outerstring graph is an intersection graph of curves that lie in a common half-plane and have one endpoint on the boundary of that half-plane.
We prove that the class of outerstring graphs is $\chi$-bounded, which means that their chromatic number is bounded by a function of their clique number.
This generalizes a series of previous results on $\chi$-boundedness of outerstring graphs with various additional restrictions on the shape of curves or the number of times the pairs of curves can cross.
The assumption that each curve has an endpoint on the boundary of the half-plane is justified by the known fact that triangle-free intersection graphs of straight-line segments can have arbitrarily large chromatic number.
\end{abstract}

\maketitle

\section{Introduction}

\subsection*{Overview}

The \emph{intersection graph} of a family of sets $\famF$ is the graph with vertex set $\famF$ and edge set comprising the pairs of members of $\famF$ that intersect.
A \emph{curve} is a homeomorphic image of the real interval $[0,1]$ in the plane.
We consider finite families $\famF$ of curves in a closed half-plane such that each curve $c\in\famF$ has exactly one point on the boundary of the half-plane and that point is an endpoint of $c$.
Such families of curves are called \emph{grounded}, and their intersection graphs are known as \emph{outerstring graphs}.

For a graph $G$, we let $\chi(G)$ denote the chromatic number of $G$ and $\omega(G)$ denote the clique number of $G$ (the maximum size of a clique in $G$).
A class of graphs $\famG$ is \emph{$\chi$-bounded} if there is a function $f\colon\setN\to\setN$ such that every graph $G\in\famG$ satisfies $\chi(G)\leq f(\omega(G))$.

The main result of this paper is that the class of outerstring graphs is $\chi$-bounded.
Specifically, we establish the following bound.

\begin{mainthm}
Every outerstring graph\/ $G$ satisfies\/ $\chi(G)\leq f(\omega(G))$, where\/ $f(\omega)=\smash[t]{2^{O(2^{\omega(\omega-1)/2})}}$.
\end{mainthm}

Outerstring graphs are special instances of \emph{string graphs}---intersection graphs of generic curves in the plane.
It is known, however, that the class of string graphs is not $\chi$-bounded \cite{PKK+14}.

\subsection*{String graphs and outerstring graphs}

Mathematical study of string graphs evolved from the works of Benzer \cite{Ben59} on the topology of genetic structures and of Sinden \cite{Sin66} on electrical networks realizable by printed circuits.
In particular, Sinden \cite{Sin66} showed that not all graphs are strings graphs while all planar graphs are.
Ehrlich, Even, and Tarjan \cite{EET76} (seemingly unaware of these earlier works) defined string graphs formally and proved that deciding whether a string graph is properly $k$-colorable is NP-complete for every $k\geq 3$.
The first thorough treatment of string graphs is by Kratochvíl, Goljan, and Kučera \cite{KGK-book}.
In particular, they introduced the concepts of outerstring graphs and \emph{double outerstring graphs}---intersection graphs of curves that lie between and connect two fixed parallel lines.
Variants of the latter concept appeared earlier in \cite{GRU83,Lov83}; all of them were shown to define precisely the class of incomparability graphs of partially ordered sets (see also \cite{FP12b}).
The term ``(double) outerstring graph'' was first used in \cite{Kra91a}.
Therein, outerstring graphs are defined as intersection graphs of curves that lie in a closed disc and have one endpoint (and no other points) on the boundary of that disc, but that definition is equivalent (up to isomorphism) to the one we use.%
\footnote{Outerstring realizations corresponding to the two definitions can be mapped to each other by a homeomorphism transforming a closed half-plane to a closed disc without one boundary point.}
Relations between various subclasses of the class of outerstring graphs were investigated in \cite{CJ17,CFM+17,JT-arxiv}.

The complexity and even decidability of string graph recognition was a long-standing open problem.
It is easy to prove that every finite string graph has a realization as the intersection graph of curves in the plane with finitely many intersection points (see \cite{KGK-book}), but it is far from clear whether the number of intersection points can be bounded by any function of the number of vertices.
Kratochvíl and Matoušek \cite{KM91} constructed string graphs on $n$ vertices that require $\smash[t]{2^{\Omega(n)}}$ intersection points in any realization, and they conjectured that there is always a realization with at most $\smash[t]{2^{\operatorname{poly}(n)}}$ intersection points.
Kratochvíl \cite{Kra91b} proved that string graph recognition is NP-hard.
Much later, Pach and Tóth \cite{PT02} and independently Schaefer and Štefankovič \cite{SS04} confirmed the above-mentioned conjecture from \cite{KM91}, thus proving that string graph recognition is decidable.
Finally, Schaefer, Sedgwick, and Štefankovič \cite{SSS03} proved that the problem belongs to NP\@.

An alternative proof of NP-hardness of string graph recognition was provided by Middendorf and Pfeiffer \cite{MP93}.
It proceeds in two steps.
The first one is a polynomial-time reduction of the NP-complete problem of recognizing Hasse diagrams \cite{Bri93,NR95} to recognition of cylinder graphs (intersection graphs of curves connecting two fixed concentric circles), which form a subclass of the outerstring graphs.%
\footnote{Specifically, it is proved in \cite[Corollary 2.7]{MP93} that a graph is a Hasse diagram if and only if it is triangle-free and its complement is a cylinder graph.}
The second step is a polynomial-time reduction of the latter problem to string graph recognition.
Kratochvíl \cite{Kra91a} observed that the reduction used for the first step also proves NP-hardness of outerstring graph recognition.%
\footnote{This is mentioned in \cite{Kra91a} in \emph{Note added in proof} and follows from the above and the fact that triangle-free complements of outerstring graphs are Hasse diagrams, which is a direct consequence of Theorem~1 in \cite{Sin66}.}
A straightforward modification of the second reduction from \cite{MP93} gives rise to a polynomial-time reduction of outerstring graph recognition to string graph recognition, thus showing (in view of the above-mentioned result of \cite{SSS03}) that outerstring graph recognition belongs to NP\@.%
\footnote{Given a graph $G$, add a vertex $r$ non-adjacent to $G$ and, for every vertex $v$ of $G$, a vertex $r_v$ adjacent only to $r$ and $v$.
The graph thus obtained is a string graph if and only if $G$ is an outerstring graph; see \cite[Lemma~1]{BBD18}.}
Biedl, Biniaz, and Derka \cite{BBD18} constructed $n$-vertex outerstring graphs requiring $\smash[t]{2^{\Omega(n)}}$ intersection points in any realization.
Double outerstring\linebreak[4] graphs (incomparability graphs) can be recognized in polynomial time \cite{Gol77}.

Pach and Tóth \cite{PT06} proved that the number of string graphs on $n$ vertices is $\smash[t]{2^{\left(\frac{3}{4}+o(1)\right)\binom{n}{2}}}$.
Although this is not stated in \cite{PT06}, an entirely analogous argument shows that the number of outerstring graphs on $n$ vertices is $\smash[t]{2^{\left(\frac{2}{3}+o(1)\right)\binom{n}{2}}}$.%
\footnote{One needs the fact that the graph $G_4$ defined in \cite{PT06} is not an outerstring graph.
This is because any outerstring representation of $G_4$ could be extended to a string representation of $G_5$, contradicting Lemma 3.2 in \cite{PT06}.}
The number of double outerstring graphs (incomparability graphs) on $n$ vertices is $\smash[t]{2^{\left(\frac{1}{2}+o(1)\right)\binom{n}{2}}}$, which follows from the result of Kleitman and Rothschild \cite{KR70}.
Janson and Uzzell \cite{JU17} extended these results, determining the limiting behavior of sequences of random string graphs, outerstring graphs, and incomparability graphs.

Every intersection graph of compact arc-connected sets in the plane is a string graph, because every such set can be approximated by its ``filling curve'' to arbitrary precision \cite{Sin66}.

\subsection*{Coloring geometric intersection graphs}

The study of chromatic number of intersection graphs of geometric objects in the plane was initiated by Asplund and Grünbaum \cite{AG60}, who proved that the class of rectangle graphs (intersection graphs of axis-parallel rectangles in the plane) is $\chi$-bounded.
Their proof gives the bound $\chi\leq 4\omega^2-4\omega$, later improved to $\chi\leq 3\omega^2-2\omega-1$ by Hendler \cite{Hen98}.
Chalermsook \cite{Cha11} proved the bound $\chi=O(\omega\log\omega)$ for the special case that none of the rectangles is contained in another.
Kostochka \cite{Kos04} claimed a construction of rectangle graphs achieving $\chi=3\omega$, and this is the best lower bound known to date.

Gyárfás \cite{Gya85} proved that circle graphs (intersection graphs of chords of a circle) satisfy $\chi\leq 2^\omega\omega^2(2^\omega-2)$.
This bound was later improved to $\chi\leq 2^\omega\omega(\omega+2)$ by Kostochka \cite{Kos88}, to $\chi\leq 50\cdot 2^\omega-32\omega-64$ by Kostochka and Kratochvíl \cite{KK97} (for the more general class of intersection graphs of polygons inscribed in a circle), and finally to $\chi\leq 21\cdot 2^\omega-24\omega-24$ by Černý \cite{Cer07}.
Using the probabilistic method, Kostochka \cite{Kos88,Kos04} proved that there exist circle graphs satisfying $\chi\geq\frac{1}{2}\omega\ln\omega-\omega$, and this is the best lower bound known to date.

Motivated by practical applications to channel assignment, Peeters \cite{Pee91} showed that intersection graphs of unit discs satisfy $\chi\leq 3\omega-2$, whereas Malesińska, Piskorz, and Weißenfels \cite{MPW98} proved $\chi\leq 6\omega-6$ for intersection graphs of discs of arbitrary sizes.
More generally, Kim, Kostochka, and Nakprasit \cite{KKN04} showed that intersection graphs of homothets of a fixed convex compact set in the plane satisfy $\chi\leq 6\omega-6$ while intersection graphs of translates of such a set satisfy $\chi\leq 3\omega-2$.
These results actually show a property stronger than $\chi$-boundedness, namely, that average degree is bounded by a function of $\omega$.
In fact, string graphs excluding the complete bipartite graph $K_{t,t}$ as a subgraph have average degree $O(t\log t)$; this was shown by Fox and Pach \cite{FP10,FP14} using a separator theorem conjectured in \cite{FP10,FP14} and later proved by Lee \cite{Lee17}.

Colorings of outerstring graphs in various restricted settings were first considered by McGuinness \cite{McG96,McG00}.
In \cite{McG96}, he proved that intersection graphs of L-shapes\footnote{An \emph{L-shape} consists of a horizontal and a vertical segment joined to form the letter~L.} intersecting a common horizontal line satisfy $\chi=\smash[t]{2^{O(4^\omega)}}$.
In \cite{McG00}, he proved that triangle-free intersection graphs of simple\footnote{A family of compact arc-connected sets is \emph{simple} if the intersection of any subset of them is arc-connected.} grounded\footnote{A family of compact arc-connected sets is \emph{grounded} if the sets are contained in a half-plane and the intersection of each of them with the boundary of the half-plane is a non-empty segment.} families of compact arc-connected sets have bounded chromatic number.
Suk \cite{Suk14} considered simple\footnote{A family of curves is \emph{simple} if any two of them intersect in at most one point.} families of $x$-monotone\footnote{A curve is \emph{$x$-monotone} if it meets every vertical line in at most one point.} curves grounded in a half-plane bounded by a vertical line; he proved that intersection graphs of such families satisfy $\chi=\smash[t]{2^{O(5^\omega)}}$.
Lasoń, Micek, Pawlik, and Walczak \cite{LMPW14} generalized both of these results, showing that intersection graphs of simple grounded families of compact arc-connected sets satisfy $\chi=\smash[t]{2^{O(2^\omega)}}$.
Our present result can be considered as a generalization of all the $\chi$-boundedness results mentioned in this paragraph to grounded families of curves with no restriction on the number of pairwise intersections.

Krawczyk and Walczak \cite{KW17} considered colorings of interval filament graphs (intersection graphs of continuous non-negative functions defined on closed intervals attaining zero values at their endpoints), which form a subclass of the class of outerstring graphs.
In particular, they constructed interval filament graphs with $\chi=\binom{\omega+1}{2}$.
This seems to be the best lower bound construction known to date for outerstring graphs.
Double outerstring graphs are perfect (satisfy $\chi=\omega$), which follows from their aforementioned characterization as incomparability graphs of partially ordered sets and from Dilworth's theorem \cite{Dil50}.

On the negative side, Burling \cite{Bur65} constructed triangle-free intersection graphs of axis-parallel boxes in $\setR^3$ with arbitrarily large chromatic number.
Using essentially the same construction, Pawlik et~al.\ \cite{PKK+13,PKK+14} proved existence of triangle-free intersection graphs of line segments (and various other kinds of geometric shapes) in the plane with arbitrarily large chromatic number.
These constructions show that some restriction on the layout of the geometric objects considered, like the one that the family is grounded, is indeed necessary to guarantee $\chi$-boundedness.

The best upper bound on the chromatic number of string graphs with $n$ vertices is $\smash[t]{(\log n)^{O(\log\omega)}}$, proved by Fox and Pach \cite{FP14} using a separator theorem due to Matoušek \cite{Mat14}, which is an earlier and weaker version of the aforementioned separator theorem of Lee \cite{Lee17}.
That was also the previous best upper bound for outerstring graphs (it follows from an earlier separator theorem for outerstring graphs \cite[Theorem 3.1]{FP12a} by the method described already in \cite{FP10}).
The above-mentioned result of Suk \cite{Suk14} implies that intersection graphs of simple families of $x$-monotone curves have chromatic number $O_\omega(\log n)$.\footnote{We write $O_\omega$ and $\Theta_\omega$ to denote asymptotic growth rate with respect to $n$ with $\omega$ fixed as a constant.}
On the other hand, the above-mentioned construction of Pawlik et~al.\ \cite{PKK+14} produces triangle-free segment intersection graphs with chromatic number $\Theta(\log\log n)$.
Krawczyk and Walczak \cite{KW17} further generalized it to a construction of string graphs with chromatic number $\Theta_\omega((\log\log n)^{\omega-1})$.
It is possible that all string graphs have chromatic number of order $\smash[t]{(\log\log n)^{f(\omega)}}$ for some function $f\colon\setN\to\setN$.
So far, bounds of this form have been established only for very special (still not $\chi$-bounded) classes of string graphs \cite{KPW15,KW17}.

Bounds on the chromatic number with respect to the clique number have also been studied for geometric disjointness graphs (complements of intersection graphs).
In particular, Pach and Tomon \cite{PT-arxiv} proved that disjointness graphs of $x$-monotone grounded curves satisfy $\chi\leq\binom{\omega+1}{2}$ (which they also proved to be tight) while disjointness graphs of $x$-monotone curves satisfy $\chi\leq\omega^2\binom{\omega+1}{2}$ (which improves the bound $\chi\leq\omega^4$ from \cite{LMPT94}).
By contrast, there are triangle-free disjointness graphs of grounded curves with arbitrarily large chromatic number \cite{MWW-arxiv}.

\subsection*{Quasi-planarity}

A \emph{topological graph} is a graph drawn in the plane so that each vertex is a point and each edge is a curve connecting the two endpoints of that edge and avoiding all other vertices.
Such a graph is \emph{$k$-quasi-planar} if it has no $k$ pairwise crossing edges (where a common endpoint is not considered as a crossing).
In particular, $2$-quasi-planar graphs are just planar graphs.
Any bound on the chromatic number of intersection graphs of curves with clique number less than $k$ implies a bound on the number of edges in $k$-quasi-planar graphs, as follows.
Given a $k$-quasi-planar topological graph $G$, if we shorten the edges a little at their endpoints so as to keep all crossings, then we obtain a family of curves with no clique of size $k$ in their intersection graph.
A proper coloring of these curves with $c$ colors yields an edge-decomposition of $G$ into $c$ planar graphs, whence the bound of $O(cn)$ on the number of edges of $G$ follows.

A well-known conjecture (see, e.g., \cite{PSS96} and \cite[Problem~1 in Section 9.6]{BMP-book}) asserts that $k$-quasi-planar topological graphs on $n$ vertices have $O_k(n)$ edges.
It was proved for $3$-quasi-planar simple\footnote{A topological graph is \emph{simple} if any two of its edges intersect in at most one point.} topological graphs by Agarwal et~al.\ \cite{AAP+97}, for all $3$-quasi-planar topological graphs by Pach, Radoičić, and Tóth \cite{PRT06}, and for $4$-quasi-planar topological graphs by Ackerman \cite{Ack09}.
Valtr \cite{Val97} proved the bound of $O_k(n\log n)$ on the number of edges in $k$-quasi-planar simple topological graphs with edges drawn as $x$-monotone curves.
An improvement of this result due to Fox, Pach, and Suk \cite{FPS13} provides the same conclusion without the simplicity assumption.
They also proved the bound of $\smash[t]{2^{\alpha(n)^c}}n\log n$ on the number of edges in $k$-quasi-planar simple topological graphs, where $\alpha$ denotes the inverse Ackermann function and $c$ depends only on $k$.
Suk and Walczak \cite{SW15} proved the same bound for $k$-quasi-planar topological graphs in which any two edges cross a bounded number of times, and they improved the bound for $k$-quasi-planar simple topological graphs to $O_k(n\log n)$.
The best known general upper bound on the number of edges in $k$-quasi-planar topological graphs, due to Fox and Pach \cite{FP12a,FP14}, is $\smash[t]{n(\log n)^{O(\log k)}}$.

\subsection*{Corollaries and follow-up generalizations}

An easy consequence of our present result is that the class of intersection graphs of curves each crossing a fixed straight line in exactly one point is $\chi$-bounded.
Indeed, we can color the parts of curves lying on each side of the line independently, and then we can color the entire curves using the pairs of colors obtained on the two sides.
This and a standard divide-and-conquer argument imply that intersection graphs of $x$-monotone curves have chromatic number $O_\omega(\log n)$, which yields an alternative proof of the result of Fox, Pach, and Suk \cite{FPS13} that $k$-quasi-planar topological graphs in which every edge is drawn as an $x$-monotone curve have $O_k(n\log n)$ edges.

By the same argument as is used in \cite{SW15} for simple families of curves, our present result has the following corollary: the class of intersection graphs of curves all of which intersect a fixed curve $c_0$ in exactly one point is $\chi$-bounded.
In the follow-up paper \cite{RW-inpress}, we prove the following generalization: for every integer $t\geq 1$, the class of intersection graphs of curves all of which intersect a fixed curve $c_0$ in at least one and at most $t$ points is $\chi$-bounded.
Then, in \cite{RW-inpress}, we use that generalization and a reduction from \cite{FPS13} to obtain the bound of $O_{k,t}(n\log n)$ on the number of edges of $k$-quasi-planar topological graphs in which any two edges cross at most $t$ times.
The aforementioned generalization depends on the result of the current paper, which serves as the base case for induction.
We refer the reader to \cite{RW-inpress} for more details.

\section{Preliminaries}

We let $H^+$ denote the upper closed half-plane determined by the horizontal axis.
We call the horizontal axis the \emph{baseline}.
We can assume, without loss of generality, that $H^+$ is the underlying half-plane of any grounded family of curves that we consider.
Accordingly, we call a family of curves \emph{grounded} if every curve in the family has one endpoint on the baseline and all the remaining part above the baseline, and we call a curve with this property itself \emph{grounded}.
The \emph{basepoint} of a grounded curve is the endpoint that lies on the baseline.
We can assume, without loss of generality, that the basepoints of all curves in any grounded family that we consider are distinct.
For if $b$ is the common basepoint of several curves in a grounded family $\famF$, then a sufficiently small neighborhood of $b$ is disjoint from the other curves in $\famF$ (because curves are compact sets and $\famF$ is finite), and an appropriate perturbation of the curves within that neighborhood of $b$ makes their basepoints distinct while keeping the curves pairwise intersecting.

A \emph{proper coloring} of a grounded family of curves $\famF$ is an assignment of colors to the curves in $\famF$ such that no two intersecting curves receive the same color.
A \emph{clique} in $\famF$ is a subfamily of $\famF$ comprising pairwise intersecting curves.
We let $\chi(\famF)$ and $\omega(\famF)$ denote the minimum number of colors in a proper coloring of $\famF$ and the maximum size of a clique in $\famF$, respectively.
Thus $\chi(\famF)$ is the chromatic number and $\omega(\famF)$ is the clique number of the intersection graph of $\famF$.

For any two grounded curves $c_1$ and $c_2$, we let $c_1\prec c_2$ denote that the basepoint of $c_1$ lies to the left of the basepoint of $c_2$.
In view of the assumption above, $\prec$ is a total order on any grounded family of curves that we consider---the left-to-right order of the basepoints.
The notation $\prec$ naturally extends to families of grounded curves: $\famF_1\prec\famF_2$ denotes that $c_1\prec c_2$ for all $c_1\in\famF_1$ and $c_2\in\famF_2$.
For any two grounded curves $c_1$ and $c_2$ with $c_1\prec c_2$ and any grounded family of curves $\famF$, we let $\famF(c_1,c_2)=\{c\in\famF\colon c_1\prec c\prec c_2\}$.

The following lemma is essentially due to McGuinness \cite[Lemma 2.1]{McG96}.
Here, we adapt it to our setting and include its short proof for completeness.

\begin{lemma}
\label{lem:mcguinness}
If\/ $\famF$ is a grounded family of curves with\/ $\chi(\famF)>2\alpha(\beta+1)$, where\/ $\alpha,\beta\geq 0$, then there is a subfamily\/ $\famH\subseteq\famF$ such that\/ $\chi(\famH)>\alpha$ and\/ $\chi(\famF(u,v))>\beta$ for any two intersecting curves\/ $u,v\in\famH$ with\/ $u\prec v$.
\end{lemma}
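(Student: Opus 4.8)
The plan is to cut $\famF$ into consecutive ``blocks'' along the left-to-right order $\prec$, keep every second block, and then thin each retained block down to a single color class of a locally chosen proper coloring. The first step is to partition $\famF$ into nonempty consecutive blocks $B_1\prec B_2\prec\cdots\prec B_m$, each an interval of the order $\prec$, so that $\chi(B_i)=\beta+1$ for every $i<m$ and $\chi(B_m)\leq\beta+1$. I would build them greedily: scanning from the left, let $B_i$ be the shortest prefix of the not-yet-used curves whose intersection graph has chromatic number at least $\beta+1$, and stop once no such prefix remains, putting the leftover curves into $B_m$; deleting the last curve of $B_i$ lowers its chromatic number to at most $\beta$, so in fact $\chi(B_i)=\beta+1$. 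The point of this decomposition is that if two curves lie in blocks $B_i$ and $B_j$ with $j\geq i+2$, then the whole block $B_{i+1}$ lies strictly between them in the order $\prec$, and $\chi(B_{i+1})=\beta+1>\beta$ since $i+1\leq j-1\leq m-1$.

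Since a proper coloring of $\famF$ can be assembled from proper colorings of the subfamily of odd-indexed blocks and of the subfamily of even-indexed blocks using disjoint palettes, the hypothesis $\chi(\famF)>2\alpha(\beta+1)$ forces one of these two subfamilies, say $\famF'$, to satisfy $\chi(\famF')>\alpha(\beta+1)$, and any two blocks used by $\famF'$ have indices differing by at least $2$. For each block $B_i$ I would fix a proper coloring $\phi_i$ of its intersection graph with colors in $\{1,\dots,\beta+1\}$, and for $k\in\{1,\dots,\beta+1\}$ let $\famH_k$ consist of the curves of $\famF'$ that receive color $k$ in their own block. As $\famF'=\famH_1\cup\cdots\cup\famH_{\beta+1}$, some $\famH_k$ has $\chi(\famH_k)\geq\chi(\famF')/(\beta+1)>\alpha$, and I would set $\famH=\famH_k$ for such a $k$.

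It remains to check the separation condition. If two intersecting curves $u,v\in\famH$ were in the same block $B_i$, they would carry the same $\phi_i$-color, contradicting that $\phi_i$ is proper; hence intersecting curves of $\famH$ lie in distinct blocks $B_i$ and $B_j$, and, since $\famF'$ uses blocks of a single parity, $j\geq i+2$ once we label them so that $u\in B_i$ and $v\in B_j$ with $i<j$. Every curve of $B_{i+1}$ then has its basepoint strictly between those of $u$ and $v$, so $B_{i+1}\subseteq\famF(u,v)$ and $\chi(\famF(u,v))\geq\chi(B_{i+1})=\beta+1>\beta$.

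I expect the only real obstacle to be arranging the block decomposition so that each block carries chromatic number exactly $\beta+1$ rather than merely at most $\beta$; with the weaker property a single intermediate block would only certify $\chi\geq\beta$, not $\chi>\beta$, and it is precisely this choice that makes the constant $2(\beta+1)$ work---the factor $2$ is consumed by the parity split and the factor $\beta+1$ by passing to one color class, leaving $\chi(\famH)>\alpha$. Degenerate values need no separate treatment: when $\beta=0$ the blocks are single curves, and when $\alpha=0$ the family $\famH$ is an independent set, so its separation condition holds vacuously.
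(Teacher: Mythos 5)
Your proof is correct and follows essentially the same route as the paper: the same greedy partition into consecutive blocks of chromatic number exactly $\beta+1$, the same parity split consuming the factor $2$, the same per-block $(\beta+1)$-coloring with selection of the best color class, and the same use of the intermediate block to certify $\chi(\famF(u,v))>\beta$. (Only your closing aside that $\famH$ is independent when $\alpha=0$ is inaccurate in general, but it is unnecessary since your main argument already covers that case.)
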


\begin{proof}
Partition $\famF$ into subfamilies $\famF_0\prec\cdots\prec\famF_n$ so that $\chi(\famF_i)=\beta+1$ for $0\leq i<n$ and $\chi(\famF_n)\leq\beta+1$.
This is done greedily, by processing the curves in $\famF$ in the order $\prec$, adding them to $\famF_0$ until $\chi(\famF_0)=\beta+1$, then adding them to $\famF_1$ until $\chi(\famF_1)=\beta+1$, and so on.
For $0\leq i\leq n$, a proper $(\beta+1)$-coloring of $\famF_i$ yields a partition of $\famF_i$ into color classes $\famF_i^1,\ldots,\famF_i^{\beta+1}$ each comprising pairwise disjoint curves.
Let $r\in\{1,\ldots,\beta+1\}$ be such that $\chi(\bigcup_{i=0}^n\famF_i^r)$ is maximized.
It follows that $\chi(\bigcup_{i=0}^n\famF_i^r)\geq\chi(\famF)/(\beta+1)>2\alpha$ and thus $\chi(\bigcup_{i\text{ even}}\famF_i^r)>\alpha$ or $\chi(\bigcup_{i\text{ odd}}\famF_i^r)>\alpha$.
Let $\famH=\bigcup_{i\text{ even}}\famF_i^r$ or $\famH=\bigcup_{i\text{ odd}}\famF_i^r$ accordingly, so that $\chi(\famH)>\alpha$.
Now, if two curves $u,v\in\famH$ with $u\prec v$ intersect, then $u\in\famF_k^r$ and $v\in\famF_\ell^r$ for two indices $k,\ell\in\{0,\ldots,n\}$ with $k<\ell$ (because no two curves in any $\famF_i^r$ intersect) of the same parity, and therefore there is at least one index $i\in\{k+1,\ldots,\ell-1\}$, implying $\famF_i\subseteq\famF(u,v)$ and thus $\chi(\famF(u,v))\geq\chi(\famF_i)>\beta$.
\end{proof}

A \emph{cap-curve} is a curve in $H^+$ that has both endpoints on the baseline and does not intersect the baseline in any other point.
It follows from the Jordan curve theorem that for every cap-curve $\gamma$, the set $H^+\setminus\gamma$ consists of two arc-connected components, one of which is bounded and denoted by $\int\gamma$ and the other is unbounded and denoted by $\ext\gamma$.
A point of the baseline belongs to $\int\gamma$ if and only if it lies strictly between the two endpoints of $\gamma$.

Most coloring arguments for geometric intersection graphs make essential use of the idea, originally due to Gyárfás \cite{Gya85}, to use distance levels to guarantee that each object to be colored lies within some restricted region and has a neighbor that crosses the boundary of that region.
The following lemma adapts this idea to our setting.
See Figure~\ref{fig:bfs} for an illustration.

\begin{figure}[t]
\centering
\begin{tikzpicture}[yscale=.9]
  \fill[black!15] plot[smooth,tension=.7] coordinates {(4.3,0) (4.5,2.5) (6.3,4.3) (8.7,4.1) (10.2,2.4) (10.6,0)}--cycle;
  \draw[dotted] plot[smooth,tension=.7] coordinates {(4.3,0) (4.5,2.5) (6.3,4.3) (8.7,4.1) (10.2,2.4) (10.6,0)};
  \draw[blue] plot[smooth,tension=.7] coordinates {(0.9,0) (1.1,1.8) (2.3,2.6) (3.8,1.7)};
  \draw[blue] plot[smooth,tension=.65] coordinates {(1.8,0) (1.6,1.4) (1.2,3.4) (2.1,4.2) (4.3,3.2)};
  \draw[blue] plot[smooth,tension=.7] coordinates {(3.3,0) (3.2,1.4) (3.7,2.6)};
  \draw[blue] plot[smooth,tension=.68] coordinates {(11.4,0) (11.05,2.6) (9.5,4.5) (6.8,5.2) (3.9,4.8) (2.3,3.5) (2.3,1.9)};
  \draw[red] plot[smooth,tension=.7] coordinates {(2.5,0) (2.8,1.3) (4.1,0.8) (5.5,1)};
  \draw[red] plot[smooth,tension=.7] coordinates {(6.6,0) (6.3,2.7) (4.9,4.3) (3.2,3.2)};
  \draw[red] plot[smooth,tension=.7] coordinates {(9.2,0) (9.2,2.2) (9.5,3.6) (10.1,4.8) (9.4,5.2) (8.3,4.5) (7.7,3.4) (7.7,2.4)};
  \draw plot[smooth,tension=.7] coordinates {(5.1,0) (5,1.3) (5.2,2.4) (5.7,2.5) (6,0.8)};
  \draw plot[smooth,tension=.7] coordinates {(7.3,0) (7.2,2.8) (8.9,3.1)};
  \draw plot[smooth,tension=.7] coordinates {(7.9,0) (7.3,1.6) (5.5,1.7)};
  \draw[dashed] plot[smooth,tension=.7] coordinates {(8.5,0) (8.5,1.8) (8.4,3.65)};
  \draw plot[smooth,tension=.7] coordinates {(9.9,0) (9.5,1.6) (8,1.7)};
  \draw (0,0)--(12.3,0);
  \node[below] at (0.9,0) {$c_0$};
  \node[below] at (5.1,0) {$c_1$};
  \node[below] at (7.3,0) {$c_2$};
  \node[below] at (7.9,0) {$c_3$};
  \node[below] at (9.9,0) {$c_4$};
  \node[above] at (6.3,4.3) {$\gamma$};
  \node at (6.9,3.7) {$\int\gamma$};
  \node at (2.6,4.6) {$E$};
\end{tikzpicture}
\caption{Illustration for Lemma~\ref{lem:bfs}: $\famG=\{c_1,c_2,c_3,c_4\}$}
\label{fig:bfs}
\end{figure}

\begin{lemma}
\label{lem:bfs}
For every grounded family of curves\/ $\famF$ with\/ $\omega(\famF)\geq 2$, there are a cap-curve\/ $\gamma$ and a subfamily\/ $\famG\subseteq\famF$ with\/ $\chi(\famG)\geq\chi(\famF)/2$ such that every curve in\/ $\famG$ is entirely contained in\/ $\int\gamma$ and intersects some curve in\/ $\famF$ that intersects\/ $\gamma$.
\end{lemma}

\begin{proof}
Since the chromatic number of a graph is the maximum of the chromatic numbers of its connected components, we can assume without loss of generality that the intersection graph of $\famF$ is connected (otherwise we can restrict $\famF$ to the component with maximum chromatic number).

Let $c_0$ be the curve in $\famF$ with leftmost basepoint.
For $i\geq 0$, let $\famF_i$ denote the family of curves in $\famF$ that are at distance $i$ from $c_0$ in the intersection graph of $\famF$.
It follows that $\famF_0=\{c_0\}$ and every curve in $\famF_i$ is disjoint from every curve in $\famF_j$ whenever $\lvert i-j\rvert\geq 2$.
Since $\bigcup_{i=0}^\infty\famF_i=\famF$, we have $\chi(\bigcup_{i\text{ even}}\famF_i)\geq\chi(\famF)/2$ or $\chi(\bigcup_{i\text{ odd}}\famF_i)\geq\chi(\famF)/2$, and therefore there is an index $d\geq 1$ such that $\chi(\famF_d)\geq\chi(\famF)/2$.
There is a subfamily $\famG\subseteq\famF_d$ such that $\chi(\famG)=\chi(\famF_d)\geq\chi(\famF)/2$ and the intersection graph of $\famG$ is connected.
The latter implies that the union of the curves in $\famG$ contains a cap-curve $\nu$ connecting the leftmost and the rightmost basepoints of the curves in $\famG$.

Let $b_0$ be the basepoint of $c_0$.
Let $E=\{b_0\}$ when $d=1$ and $E$ be the union of the curves in $\bigcup_{i=0}^{d-2}\famF_i$ when $d\geq 2$ (in particular, $b_0\in E$).
Let $G$ be the union of the curves in $\famG$.
Since every curve in $\bigcup_{i=0}^{d-2}\famF_i$ is disjoint from every curve in $\famG$ and the intersection graphs of both families are connected, the sets $E$ and $G$ are disjoint arc-connected subsets of $H^+$.
Furthermore, the set $E$ lies in the unbounded arc-connected component of $H^+\setminus G$, because so does the point $b_0$.
Therefore, there is a cap-curve $\gamma$ separating $E$ and $G$ in $H^+$ so that $G\subset\int\gamma$ and $E\subset\ext\gamma$.

For every curve $c\in\famG$, since $c\in\famF_d$, there is a curve $s\in\famF_{d-1}$ that intersects $c$.
Since $s\in\famF_{d-1}$, either $s=c_0$ (when $d=1$) or $s$ intersects some curve in $\famF_{d-2}$ (when $d\geq 2$).
In either case, $s$ intersects both $E$ and $G$.
This, $E\subset\ext\gamma$, and $G\subset\int\gamma$ imply that $s$ intersects $\gamma$.
We conclude that every curve in $\famG$ intersects some curve in $\famF$ that intersects $\gamma$.
\end{proof}

The next lemma is essentially a reformulation of the known fact (mentioned in the introduction) that double outerstring graphs are perfect.
We provide its proof for completeness.

\begin{lemma}
\label{lem:perfect}
If\/ $\gamma$ is a cap-curve and\/ $\famU$ is a grounded family of curves each having one endpoint (other than the basepoint) on\/ $\gamma$ and all the remaining part in\/ $\int\gamma$, then\/ $\chi(\famU)=\omega(\famU)$.
\end{lemma}

\begin{proof}
Consider a binary relation $<$ on $\famU$ defined as follows, for any $u_1,u_2\in\famU$:
\begin{equation*}
u_1<u_2\quad\text{if and only if}\quad u_1\prec u_2\quad\text{and}\quad u_1\cap u_2=\emptyset.
\end{equation*}
We claim that $<$ is a strict partial order on $\famU$.
Clearly, it is an irreflexive and antisymmetric relation.
For transitivity, suppose $u_1<u_2$ and $u_2<u_3$ but not $u_1<u_3$, for some $u_1,u_2,u_3\in\famU$.
Since $u_1\prec u_2\prec u_3$, it follows that $u_1\cap u_3\neq\emptyset$.
Therefore, there is a cap-curve $\nu\subseteq u_1\cup u_3$ that connects the basepoints of $u_1$ and $u_3$.
The basepoint of $u_2$ lies in $\int\nu$, while the other endpoint of $u_2$ lies in $\ext\nu$, as it lies on $\gamma$ and $\nu\subset\int\gamma$.
It follows that $u_2$ intersects $\nu$.
This yields $u_1\cap u_2\neq\emptyset$ or $u_2\cap u_3\neq\emptyset$, either of which is a contradiction.

Since the antichains of the order $<$ on $\famU$ are the cliques in the intersection graph of $\famU$, the width (maximum size of an antichain) of $<$ is $\omega(\famU)$.
Therefore, by Dilworth's theorem \cite{Dil50}, the family $\famU$ can be partitioned into $\omega(\famU)$ chains with respect to $<$.
Such a partition is equivalent to a proper coloring of $\famU$ with $\omega(\famU)$ colors.
\end{proof}

\section{Proof}

\subsection*{Setup}

Here is our main theorem in the form that we are going to prove.

\begin{mainthm}[rephrased]
There is a function\/ $f\colon\setN\to\setN$ with\/ $f(k)=\smash[t]{2^{O(2^{k(k-1)/2})}}$ such that for every\/ $k\in\setN$, every grounded family of curves\/ $\famF$ with\/ $\omega(\famF)\leq k$ satisfies\/ $\chi(\famF)\leq f(k)$.
\end{mainthm}

The proof proceeds by induction on $k$.
We let $f(1)=1$, which clearly satisfies the conclusion of the theorem for $k=1$.
For $k\geq 2$, we assume that $f(k-1)$ is an upper bound on the chromatic number of grounded families of curves with clique number at most $k-1$, and we use it to derive an upper bound $f(k)$ on the chromatic number of grounded families of curves with clique number at most $k$.
The induction hypothesis is applied only as follows: if $\famF$ is a grounded family of curves with $\omega(\famF)\leq k$ and $c\in\famF$, then the family of curves in $\famF\setminus\{c\}$ that intersect $c$ (that is, the neighborhood of $c$ in the intersection graph of $\famF$) has clique number at most $k-1$ and therefore has chromatic number at most $f(k-1)$.

This motivates the following definition: for $\xi\in\setN$, a \emph{$\xi$-family} is a grounded family of curves $\famF$ such that for every curve $c\in\famF$, the family of curves in $\famF\setminus\{c\}$ that intersect $c$ has chromatic number at most $\xi$.
In the rest of the paper, we prove the following lemma.

\begin{lemma}
\label{lem:main}
There is a constant\/ $\zeta=\smash[t]{2^{O(k2^{2k})}\xi^{2^{k-1}-1}}$ such that every\/ $\xi$-family\/ $\famF$ with\/ $\omega(\famF)\leq k$ satisfies\/ $\chi(\famF)\leq\zeta$.
\end{lemma}

\noindent
Hereafter, for clarity of presentation, we treat $k$ and $\xi$ as fixed integer constants (except in statements on asymptotic growth rate) with $k\geq 2$ and $\xi\geq 1$.

To complete the induction step in the proof of our main theorem, we fix $\xi=f(k-1)$, and we let $f(k)$ be the constant $\zeta$ claimed by Lemma~\ref{lem:main} for $k$ and $\xi$.
By the induction hypothesis, every grounded family of curves with clique number at most $k$ is an $f(k-1)$-family and therefore, by Lemma~\ref{lem:main}, has chromatic number at most $f(k)$.

To see that the bound on $\zeta$ from Lemma~\ref{lem:main} implies the bound on $f$ from our main theorem, suppose $k\geq 2$ and $\log_2f(k-1)\leq C\cdot\smash[t]{2^{(k-1)(k-2)/2}}$ for some large constant $C$.
Lemma~\ref{lem:main} applied with $\xi=f(k-1)$ then provides the following bound:
\begin{equation*}
\begin{split}
\log_2\chi(\famF)&\leq C\cdot\smash[t]{2^{(k-1)(k-2)/2}}\cdot(2^{k-1}-1)+O(k2^{2k})\\
&=C\cdot\smash[t]{2^{k(k-1)/2}}-C\cdot\smash[t]{2^{(k-1)(k-2)/2}}+O(k2^{2k})\leq C\cdot\smash[t]{2^{k(k-1)/2}},
\end{split}
\end{equation*}
where the last inequality holds when $C$ is large enough.
This completes the proof of our main theorem provided that we have Lemma~\ref{lem:main}.
It remains to prove the lemma.

Here is an overview of the proof of Lemma~\ref{lem:main}.
Assuming that $\famF$ is a $\xi$-family with clique number at most $k$ and sufficiently large chromatic number, we show that some specific configurations must occur in $\famF$.
Our goal is to reach a contradiction by finding a clique of size $k+1$ in $\famF$.

First, we show that every $\xi$-family with clique number at most $k$ and sufficiently large chromatic number contains a subfamily with large chromatic number \emph{supported by a skeleton} or \emph{supported from outside} (Lemma~\ref{lem:sideways}).
Iterating this step on the successive subfamilies, we construct a long chain of subfamilies supported by skeletons or supported from outside.
Then, the proof splits into two parts.
In the first part, we show that a long chain of subfamilies supported from outside contains a long \emph{bracket system} (proof of Lemma~\ref{lem:skeleton}), which then contains a large clique (Lemma~\ref{lem:bracket-system}).
In the second part, we show that a long chain of subfamilies supported by skeletons contains a long \emph{tree-configuration} (Lemma~\ref{lem:tree-configuration}), which then contains a large clique (Lemma~\ref{lem:tree-configuration-clique}).

\subsection*{Skeletons}

A \emph{skeleton} is a pair $(\gamma,\famU)$ such that $\gamma$ is a cap-curve and $\famU$ is a family of pairwise disjoint grounded curves each having one endpoint (other than the basepoint) on $\gamma$ and all the remaining part in $\int\gamma$ (see Figure~\ref{fig:skeleton}).
For a grounded family of curves $\famF$, a skeleton $(\gamma,\famU)$ is an \emph{$\famF$-skeleton} if every curve in $\famU$ is a subcurve of some curve in $\famF$.
A grounded family of curves $\famG$ is \emph{supported} by a skeleton $(\gamma,\famU)$ if every curve in $\famG$ lies entirely in $\int\gamma$ and intersects some curve in $\famU$.
A subfamily $\famH$ of a grounded family of curves $\famF$ is \emph{supported from outside} in $\famF$ if every curve in $\famH$ intersects some curve in the set $\{s\in\famF\colon s\prec\famH$ or $\famH\prec s\}$.

\begin{figure}[t]
\centering
\begin{tikzpicture}[yscale=.7]
  \draw[name path=g,red,dotted,fill=red!10] plot[smooth,tension=.7] coordinates {(1,0) (1,1.8) (1.6,3.4) (3,4) (4.8,3.8) (6,4.8) (7.8,5) (9.3,4.4) (9,3) (9.4,1.6) (9.4,0)};
  \draw[red] plot[smooth,tension=.7] coordinates {(2.7,0) (2.6,1.4) (2.8,2.8) (1.6,3.4)};
  \draw[red] plot[smooth,tension=.7] coordinates {(4.2,0) (4.2,1.2) (5.2,2.2) (4.6,3) (4.8,3.8)};
  \draw[red] plot[smooth,tension=.7] coordinates {(6,0) (6.4,2.6) (6,4.8)};
  \draw[red] plot[smooth,tension=.7] coordinates {(8.2,0) (8,2.6) (9,3)};
  \draw (0,0)--(10.4,0);
  \draw plot[smooth,tension=.75] coordinates {(5.1,0) (5,1.2) (4,2)};
  \draw plot[smooth,tension=.75] coordinates {(7.1,0) (7.2,2.7) (7.4,4)};
  \node[above] at (3,4) {$\gamma$};
  \node[below] at (2.7,0) {$u_1$};
  \node[below] at (4.2,0) {$u_2$};
  \node[below] at (6,0) {$u_3$};
  \node[below] at (8.2,0) {$u_4$};
  \node[below] at (5.1,0) {$c_1$};
  \node[below] at (7.1,0) {$c_2$};
  \node at (3.8,2.6) {$\int\gamma$};
\end{tikzpicture}
\caption{A skeleton $\bigl(\gamma,\{u_1,u_2,u_3,u_4\}\bigr)$, which supports $c_1$ but not $c_2$}
\label{fig:skeleton}
\end{figure}

\begin{lemma}
\label{lem:sideways}
For any\/ $\alpha,\beta\in\setN$, every\/ $\xi$-family\/ $\famF$ with\/ $\omega(\famF)\leq k$ and\/ $\chi(\famF)>2(k\alpha+\beta)$ contains at least one of the following configurations:
\begin{itemize}
\item a subfamily\/ $\famG\subseteq\famF$ with\/ $\chi(\famG)>\alpha$ supported by an\/ $\famF$-skeleton,
\item a subfamily\/ $\famH\subseteq\famF$ with\/ $\chi(\famH)>\beta$ supported from outside in\/ $\famF$.
\end{itemize}
\end{lemma}

\begin{proof}
Apply Lemma~\ref{lem:bfs} to obtain a cap-curve $\gamma$ and a subfamily $\famG\subseteq\famF$ with $\chi(\famG)>k\alpha+\beta$ such that every curve in $\famG$ lies entirely in $\int\gamma$ and intersects some curve in $\famS$, where $\famS$ are the curves in $\famF$ that intersect $\gamma$.
Let $\famU$ be the family of grounded curves obtained by taking, for each curve $s\in\famS$ with basepoint in $\int\gamma$, the part of $s$ from the basepoint to the first intersection point with~$\gamma$.
The family $\famU$ satisfies the assumption of Lemma~\ref{lem:perfect}, which yields $\chi(\famU)=\omega(\famU)\leq\omega(\famF)\leq k$, as $\famF$ is a $\xi$-family.
A proper $k$-coloring partitions $\famU$ into families $\famU_1,\ldots,\famU_k$ each consisting of pairwise disjoint curves.
It follows that $(\gamma,\famU_1),\ldots,(\gamma,\famU_k)$ are $\famF$-skeletons.

For $1\leq i\leq k$, let $\famG_i$ be the curves in $\famG$ that intersect some curve in $\famU_i$, so that the family $\famG_i$ is supported by the skeleton $(\gamma,\famU_i)$.
If $\chi(\famG_i)>\alpha$, then $\famG_i$ and $(\gamma,\famU_i)$ satisfy the first condition in the conclusion of the lemma, so suppose $\chi(\famG_i)\leq\alpha$, for $1\leq i\leq k$.
Let $\famG'=\famG\setminus(\famG_1\cup\cdots\cup\famG_k)$.
It follows that $\chi(\famG')\geq\chi(\famG)-k\alpha>\beta$.

Since the chromatic number of a graph is the maximum of the chromatic numbers of its connected components, there is a subfamily $\famH\subseteq\famG'$ such that $\chi(\famH)=\chi(\famG')>\beta$ and the intersection graph of $\famH$ is connected.
The latter implies that the union of the curves in $\famH$ contains a cap-curve $\nu$ connecting the leftmost and the rightmost basepoints of the curves in $\famH$.
Suppose there is a curve $s\in\famS$ with basepoint between the leftmost and the rightmost basepoints of the curves in $\famH$.
The part of $s$ from the basepoint to the first intersection point with $\gamma$ is a curve in some $\famU_i$ ($1\leq i\leq k$) that must intersect $\nu$ (as $\nu\subset\int\gamma$) and thus some curve in $\famG'$ (as $\nu$ lies in the union of the curves in $\famG'$).
This yields $\famG'\cap\famG_i\neq\emptyset$, which is a contradiction.
Thus $s\prec\famH$ or $\famH\prec s$.
We conclude that $\famH$ satisfies the second condition in the conclusion of the lemma.
\end{proof}

\subsection*{Brackets and bracket systems}

A \emph{bracket} is a pair $(\famH,\famS)$ of families of grounded curves with the following properties:
\begin{itemize}
\item $\famS\prec\famH$ or $\famH\prec\famS$,
\item every curve in $\famH$ intersects some curve in $\famS$.
\end{itemize}
To motivate this definition, observe that a subfamily $\famH$ supported from outside in a grounded family of curves $\famF$ gives rise to two brackets $(\famH^L,\famS^L)$ and $(\famH^R,\famS^R)$ with $\famH=\famH^L\cup\famH^R$, where
\begin{alignat*}{2}
\famS^L&=\{s\in\famF\colon s\prec\famH\},\qquad & \famH^L&=\{c\in\famH\colon c\text{ intersects a curve in }\famS^L\},\\
\famS^R&=\{s\in\famF\colon\famH\prec s\},\qquad & \famH^R&=\{c\in\famH\colon c\text{ intersects a curve in }\famS^R\}.
\end{alignat*}
For a fixed bracket $(\famH,\famS)$ and a curve $c\in\famH$, we introduce the following notation (see Figure~\ref{fig:bracket}):
\begin{itemize}
\item $p(c)$ is the first intersection point of $c$ with a curve in $\famS$ encountered when following $c$ in the direction from the basepoint towards the other endpoint,
\item $s(c)$ is an arbitrarily chosen curve in $\famS$ that contains the point $p(c)$,
\item $c'$ is the part of $c$ from the basepoint to $p(c)$ excluding the point $p(c)$,
\item $\nu(c)$ is the cap-curve formed by the union of $c'$ and the part of $s(c)$ from the basepoint to $p(c)$,
\item $I(c)$ is the closed bounded region determined by $\nu(c)$ and the part of the baseline between the two endpoints of $\nu(c)$.
\end{itemize}
It follows that $c'$ is disjoint from every curve in $\famS$, for every $c\in\famH$.
Finally, we let $I=\bigcap_{c\in\famH}I(c)$, and we call $I$ the \emph{internal region} of the bracket $(\famH,\famS)$.

\begin{figure}[t]
\centering
\begin{tikzpicture}
  \path[name path=s1] plot[smooth,tension=.7] coordinates {(0.5,0) (2,4.5) (5.5,2.5) (7.5,2.5)};
  \path[name path=s2] plot[smooth,tension=.7] coordinates {(2,0) (2.5,5) (7,5) (9.5,5.5)};
  \path[name path=s3] plot[smooth,tension=.7] coordinates {(3,0) (3,4) (8,4)};
  \path[name path=c1] plot[smooth,tension=.7] coordinates {(5.5,0) (5.5,2.5) (7.5,3.5) (10,2.8)};
  \path[name path=c2] plot[smooth,tension=.7] coordinates {(6.5,0) (7,1.5) (9.5,3) (7.6,5.9)};
  \path[name path=c3] plot[smooth,tension=.7] coordinates {(7.5,0) (6.5,3.5) (3,3) (0.5,4.3)};
  \path[name path=c4] plot[smooth,tension=.7] coordinates {(9,0) (8,3) (6,5.5)};
  \begin{scope}
  \clip plot[smooth,tension=.7] coordinates {(0.5,0) (2,4.5) (5.5,2.5) (7.5,2.5)}--(7.5,0)--cycle;
  \clip plot[smooth,tension=.7] coordinates {(3,0) (3,4) (8,4)}--(8,0)--cycle;
  \fill[black!15] plot[smooth,tension=.7] coordinates {(5.5,0) (5.5,2.5) (7.5,3.5) (10,3)}--(10,5)--(0.5,5)--(0.5,0)--cycle;
  \end{scope}
  \draw[red] plot[smooth,tension=.7] coordinates {(0.5,0) (2,4.5) (5.5,2.5) (7.5,2.5)};
  \draw[red] plot[smooth,tension=.7] coordinates {(2,0) (2.5,5) (7,5) (9.5,5.5)};
  \draw[red] plot[smooth,tension=.7] coordinates {(3,0) (3,4) (8,4)};
  \draw[intersection segments={of=c1 and s1,sequence=L1}];
  \draw[intersection segments={of=c2 and s2,sequence=L1}];
  \draw[intersection segments={of=c3 and s1,sequence=L1}];
  \draw[intersection segments={of=c4 and s3,sequence=L1}];
  \draw[dashed,intersection segments={of=c1 and s1,sequence=L2}];
  \draw[dashed,intersection segments={of=c2 and s2,sequence=L2}];
  \draw[dashed,intersection segments={of=c3 and s1,sequence=L2--L3--L4}];
  \draw[dashed,intersection segments={of=c4 and s3,sequence=L2}];
  \draw[fill=white,name intersections={of=c1 and s1}] (intersection-1) circle (2pt);
  \draw[fill=white,name intersections={of=c2 and s2}] (intersection-1) circle (2pt);
  \draw[fill=white,name intersections={of=c3 and s1}] (intersection-1) circle (2pt);
  \draw[fill=white,name intersections={of=c4 and s3}] (intersection-1) circle (2pt);
  \draw (-0.5,0)--(10.5,0);
  \node[below] at (0.5,0) {$s_1$};
  \node[below] at (2,0) {$s_2$};
  \node[below] at (3,0) {$s_3$};
  \node[below] at (5.5,0) {$c_1$};
  \node[below] at (6.5,0) {$c_2$};
  \node[below] at (7.5,0) {$c_3$};
  \node[below] at (9,0) {$c_4$};
  \node at (4,2) {$I$};
\end{tikzpicture}
\caption{A bracket $(\{c_1,c_2,c_3,c_4\},\{s_1,s_2,s_3\})$ with internal region $I$ and with $s_1=s(c_1)=s(c_3)$, $s_2=s(c_2)$, $s_3=s(c_4)$.
The part $c_i'$ of each $c_i$ is drawn solid.}
\label{fig:bracket}
\end{figure}

\begin{lemma}
\label{lem:boundary}
If\/ $(\famH,\famS)$ is a bracket with internal region\/ $I$, then there are two cliques\/ $\famH_I\subseteq\famH$ and\/ $\famS_I\subseteq\famS$ such that every curve with basepoint in\/ $I$ lies entirely in\/ $I$ or intersects at least one of the curves in\/ $\famH_I\cup\famS_I$.
\end{lemma}

\begin{proof}
Assume $\famS\prec\famH$; the case that $\famH\prec\famS$ is analogous.
Let $\famH_I=\{c\in\famH\colon c'\cap I\neq\emptyset\}$ and $\famS_I=\{s(c)\in\famS\colon c\in\famH$ and $s(c)\cap I\neq\emptyset\}$.
To see that $\famH_I$ is a clique, suppose there are two disjoint curves $c_1,c_2\in\famH_I$ with $c_1\prec c_2$.
It follows that $c_2'$ is disjoint from $c_1'$ and $s(c_1)$.
This and $s(c_1)\prec c_1\prec c_2$ imply that $c_2'$ is disjoint from $I(c_1)$.
This and $I\subseteq I(c_1)$ contradict the assumption that $c_2'\cap I\neq\emptyset$.
To see that $\famS_I$ is a clique, suppose there are $c_1,c_2\in\famH$ such that $s(c_1)$ and $s(c_2)$ are two disjoint curves in $\famS_I$ with $s(c_1)\prec s(c_2)$.
This, the fact that $s(c_1)$ is also disjoint from $c_2'$, and $s(c_1)\prec s(c_2)\prec c_2$ imply that $s(c_1)$ is disjoint from $I(c_2)$.
This and $I\subseteq I(c_2)$ contradict the assumption that $s(c_1)\cap I\neq\emptyset$.

Let $z$ be a curve with basepoint in $I$ that does not lie entirely in $I$.
It follows that $z\not\subset I(c)$ and therefore $z$ intersects $\nu(c)$ for at least one curve $c\in\famH$.
Let $c\in\famH$ be such that $\nu(c)$ is the first curve of this form intersected by $z$, and let $p$ be the first intersection point of $z$ with $\nu(c)$.
This yields $p\in I$, as $I$ is a closed set.
The fact that $p\in\nu(c)\subseteq c'\cup s(c)$ implies $p\in c'$, in which case $c\in\famH_I$, or $p\in s(c)$, in which case $s(c)\in\famS_I$.
In either case, $z$ intersects a curve in $\famH_I\cup\famS_I$.
\end{proof}

A \emph{bracket system} is a sequence of brackets $\bigl((\famH_0,\famS_0),\ldots,(\famH_n,\famS_n)\bigr)$ with internal regions $I_0,\ldots,I_n$, respectively, and with the following properties, for every $i$ with $0\leq i\leq n-1$:
\begin{itemize}
\item every curve in $\famH_i$ lies entirely in $I_{i+1}\cap\cdots\cap I_n$,
\item $\famS_i\prec(\famH_{i+1}\cup\famS_{i+1})\cup\cdots\cup(\famH_n\cup\famS_n)$ or $(\famH_{i+1}\cup\famS_{i+1})\cup\cdots\cup(\famH_n\cup\famS_n)\prec\famS_i$.
\end{itemize}

\begin{lemma}
\label{lem:bracket-system}
Let\/ $\bigl((\famH_0,\famS_0),\ldots,(\famH_n,\famS_n)\bigr)$ be a bracket system in a\/ $\xi$-family\/ $\famF$.
If\/ $\chi(\famH_i)>i\xi$ for every\/ $i$ with\/ $0\leq i\leq n$, then there is a clique\/ $\{s_0,\ldots,s_n\}$ with\/ $s_i\in\famS_i$ for every\/ $i$ with\/ $0\leq i\leq n$.
\end{lemma}

\begin{proof}
We proceed by induction on $n$.
The assumption that $\chi(\famH_0)>0$ yields $\famH_0\neq\emptyset$ and thus $\famS_0\neq\emptyset$.
Choose any $s_0\in\famS_0$.
This already completes the proof for the base case of $n=0$, as $\{s_0\}$ is a clique.
Therefore, for the rest of the proof, suppose that $n\geq 1$ and the lemma holds for $n-1$.
By the first property of a bracket system, since $s_0$ intersects a curve in $\famH_0$, $s_0$ intersects $I_1\cap\cdots\cap I_n$.
This and the definition of the internal region of a bracket imply that $s_0$ intersects $I(c)$ for all $c\in\famH_1\cup\cdots\cup\famH_n$.
This and the second property of a bracket system imply that $s_0$ intersects $\nu(c)$ for all $c\in\famH_1\cup\cdots\cup\famH_n$.
For $1\leq i\leq n$, let $\famH_i'$ be the curves in $\famH_i$ that do not intersect $s_0$, and let $\famS_i'=\{s(c)\colon c\in\famH_i'\}$.
It follows that $\chi(\famH_i\setminus\famH_i')\leq\xi$ (as $\famF$ is a $\xi$-family) and thus $\chi(\famH_i')\geq\chi(\famH_i)-\xi>(i-1)\xi$ for $1\leq i\leq n$.
Therefore, we can apply the induction hypothesis to the bracket system $\bigl((\famH_1',\famS_1'),\ldots,(\famH_n',\famS_n')\bigr)$ to find a clique $\{s_1,\ldots,s_n\}$ with $s_i\in\famS_i'$ for $1\leq i\leq n$.
For every $c\in\famH_1'\cup\cdots\cup\famH_n'$, since $s_0$ intersects $\nu(c)$ but not $c$, it intersects $s(c)$.
In particular, $s_0$ intersects each of $s_1,\ldots,s_n$, and therefore $\{s_0,\ldots,s_n\}$ is a clique.
\end{proof}

\begin{lemma}
\label{lem:skeleton}
There is a function\/ $g\colon\setN\to\setN$ with\/ $g(\alpha)=\smash[t]{2^{O(k)}}(\xi+\alpha)$ such that for every\/ $\alpha\in\setN$, every\/ $\xi$-family\/ $\famF$ with\/ $\omega(\famF)\leq k$ and\/ $\chi(\famF)>g(\alpha)$ contains a subfamily\/ $\famG\subseteq\famF$ with\/ $\chi(\famG)>\alpha$ supported by an\/ $\famF$-skeleton.
\end{lemma}

\begin{proof}
Fix $\alpha\in\setN$, and let 
\begin{alignat*}{3}
\gamma_1&=0,\qquad & \gamma_{i+1}&=2(\gamma_i+2k\xi+i\xi+1)\quad & \text{for }1\leq i\leq k,\\
\beta_{k+1}&=\gamma_{k+1},\qquad & \beta_i&=2(k\alpha+\beta_{i+1})\quad & \text{for }0\leq i\leq k.
\end{alignat*}
Let $g(\alpha)=\beta_0$.
It easily follows from the definition above that $g$ has the required order of magnitude.
It remains to prove that $g$ has the property claimed by the lemma.

Let $\famF$ be a $\xi$-family with $\omega(\famF)\leq k$ and $\chi(\famF)>\beta_0$.
Suppose for the sake of contradiction that every subfamily of $\famF$ supported by an $\famF$-skeleton has chromatic number at most $\alpha$.
Let $\famF_0=\famF$.
Apply Lemma~\ref{lem:sideways} (and the second conclusion thereof) $k+1$ times to find families $\famF_1,\ldots,\famF_{k+1}$ with the following properties:
\begin{itemize}
\item $\famF=\famF_0\supseteq\famF_1\supseteq\cdots\supseteq\famF_{k+1}$,
\item for $0\leq i\leq k$, the family $\famF_{i+1}$ is supported from outside in $\famF_i$,
\item for $0\leq i\leq k+1$, we have $\chi(\famF_i)>\beta_i$.
\end{itemize}
In particular, by the last condition, we have $\chi(\famF_{k+1})>\beta_{k+1}=\gamma_{k+1}$.

We claim that there are families $\famG_1,\ldots,\famG_k$ and brackets $(\famH_0,\famS_0),\ldots,(\famH_k,\famS_k)$ with internal regions $I_0,\ldots,I_k$, respectively, and with the following properties:
\begin{itemize}
\item $\famG_1\subseteq\cdots\subseteq\famG_k\subseteq\famG_{k+1}=\famF_{k+1}$,
\item for $0\leq i\leq k$, we have $\famH_i\subseteq\famG_{i+1}$ and $\chi(\famH_i)=i\xi+1$,
\item for $0\leq i\leq k$, we have $\famS_i\subseteq\famF_i$ and either $\famS_i\prec\famF_{i+1}$ or $\famF_{i+1}\prec\famS_i$,
\item for $1\leq i\leq k$, every curve in $\famG_i$ is entirely contained in $I_i\cap\cdots\cap I_k$,
\item for $1\leq i\leq k+1$, we have $\chi(\famG_i)>\gamma_i$.
\end{itemize}
This suffices for the proof of the lemma---the properties above imply that $\bigl((\famH_0,\famS_0),\ldots,(\famH_k,\famS_k)\bigr)$ is a bracket system of length $k+1$ that satisfies the assumption of Lemma~\ref{lem:bracket-system} and therefore (by Lemma~\ref{lem:bracket-system}) contains a clique of size $k+1$, contradicting the assumption that $\omega(\famF)\leq k$.

Let $\famG_{k+1}=\famF_{k+1}$.
For $1\leq i\leq k$ (considered in the order from $k$ to $1$), we assume that we have already found $\famG_{i+1}$, and we show how to find $\famH_i$, $\famS_i$, and $\famG_i$.
Let
\begin{alignat*}{2}
\famF_i^L&=\{s\in\famF_i\colon s\prec\famF_{i+1}\},\qquad & \famG_{i+1}^L&=\{c\in\famG_{i+1}\colon c\text{ intersects a curve in }\famF_i^L\},\\
\famF_i^R&=\{s\in\famF_i\colon\famF_{i+1}\prec s\},\qquad & \famG_{i+1}^R&=\{c\in\famG_{i+1}\colon c\text{ intersects a curve in }\famF_i^R\}.
\end{alignat*}
Since $\famG_{i+1}\subseteq\famF_{i+1}$ and $\famF_{i+1}$ is supported from outside in $\famF_i$, we have $\famG_{i+1}=\famG_{i+1}^L\cup\famG_{i+1}^R$ and thus $\chi(\famG_{i+1}^L)\geq\chi(\famG_{i+1})/2$ or $\chi(\famG_{i+1}^R)\geq\chi(\famG_{i+1})/2$.
Assume the former (the other case is analogous).
This and $\chi(\famG_{i+1})>\gamma_{i+1}=2(\gamma_i+2k\xi+i\xi+1)$ imply $\chi(\famG_{i+1}^L)>\gamma_i+2k\xi+i\xi+1$.
Let $\famH_i$ be a subfamily of $\famG_{i+1}^L$ such that $\famG_{i+1}^L\setminus\famH_i\prec\famH_i$ and $\chi(\famH_i)=i\xi+1$.
It can be found greedily, by processing the curves in $\famG_{i+1}^L$ in the order opposite to $\prec$ (from right to left), adding them to $\famH_i$ until $\chi(\famH_i)=i\xi+1$.
Let $\famS_i=\famF_i^L$.
By the definition of $\famG_{i+1}^L$, every curve in $\famH_i$ intersects some curve in $\famS_i$.
It follows that $(\famH_i,\famS_i)$ is a bracket with $\famS_i\prec\famG_{i+1}^L\setminus\famH_i\prec\famH_i$.
Let $I_i$ be the internal region of the bracket $(\famH_i,\famS_i)$.
Let $\famG_i$ be the curves in $\famG_{i+1}^L\setminus\famH_i$ that are entirely contained in $I_i$.
Since $\omega(\famG_{i+1}^L\setminus\famH_i)\leq\omega(\famF)\leq k$, Lemma~\ref{lem:boundary} provides a set of at most $2k$ curves in $\famH_i\cup\famS_i$ such that every curve in $(\famG_{i+1}^L\setminus\famH_i)\setminus\famG_i$ intersects at least one of them.
Since $\famF$ is a $\xi$-family, it follows that $\chi((\famG_{i+1}^L\setminus\famH_i)\setminus\famG_i)\leq 2k\xi$ and thus $\chi(\famG_i)\geq\chi(\famG_{i+1}^L\setminus\famH_i)-2k\xi\geq\chi(\famG_{i+1}^L)-(i\xi+1)-2k\xi>\gamma_i$.
We have thus found families $\famH_i$, $\famS_i$, and $\famG_i$ with the requested properties.

After completing their construction for $1\leq i\leq k$, we choose any $c_0\in\famG_1$, which exists because $\chi(\famG_1)>0$, we choose any $s_0\in\famF_0$ with $s_0\prec\famF_1$ or $\famF_1\prec s_0$ that intersects $c_0$, which exists because $\famG_1\subseteq\famF_1$ and $\famF_1$ is supported from outside in $\famF_0$, and we let $\famH_0=\{c_0\}$ and $\famS_0=\{s_0\}$.
This yields a bracket $(\famH_0,\famS_0)$ with the requested properties and completes the proof of the claim.
\end{proof}

\subsection*{Tree-configurations}

A \emph{binary tree} is a rooted tree in which every node has at most one \emph{left child} and at most one \emph{right child}.
A \emph{descendant} of a node $v$ in such a tree is any node $u$ such that $u\neq v$ and $v$ lies on the path from the root of the tree to $u$.
A \emph{left descendant} of a node $v$ is the left child of $v$ or any descendant of the left child of $v$, and a \emph{right descendant} of $v$ is the right child of $v$ or any descendant of the right child of $v$.
A \emph{tree-configuration} is a sequence $\bigl((x_1,y_1),\ldots,(x_n,y_n)\bigr)$ of pairs of grounded curves such that the following conditions are satisfied:
\begin{itemize}
\item $x_1\prec\cdots\prec x_n\prec y_n\prec\cdots\prec y_1$,
\item for $1\leq i\leq n$, the curves $x_i$ and $y_i$ intersect,
\end{itemize}
and the pairs $(x_1,y_1),\ldots,(x_n,y_n)$ can be arranged into a binary tree $T$ so that
\begin{itemize}
\item $(x_1,y_1)$ is the root of $T$,
\item for $2\leq i\leq n$, the parent of $(x_i,y_i)$ in $T$ is one of $(x_1,y_1),\ldots,(x_{i-1},y_{i-1})$,
\item for $1\leq i<j\leq n$, if $(x_j,y_j)$ is a left descendant of $(x_i,y_i)$, then both $x_j$ and $y_j$ intersect $x_i$, and if $(x_j,y_j)$ is a right descendant of $(x_i,y_i)$, then both $x_j$ and $y_j$ intersect $y_i$.
\end{itemize}
The number $n$ is the \emph{length} of the tree-configuration $\bigl((x_1,y_1),\ldots,(x_n,y_n)\bigr)$, and any tree $T$ satisfying the conditions above is a \emph{witness} of the tree-configuration.
See Figure~\ref{fig:tree-configuration} for an illustration.
The following lemma explains the role of tree-configurations.

\begin{figure}[t]
\centering
\begin{tikzpicture}[yscale=.9]
  \draw plot[smooth,tension=.7] coordinates {(-3.5,0) (-3.4,2.8) (-0.8,4.5) (2.5,4.5) (3.8,3.3) (2.5,1.8)};
  \draw plot[smooth,tension=.7] coordinates {(3.8,0) (3.5,1.8) (2,3.3)};
  \draw plot[smooth,tension=.7] coordinates {(-2.1,0) (-2.5,1.6) (-4.2,3.5) (-2.8,5)};
  \draw plot[smooth,tension=.7] coordinates {(2.6,0) (1.7,1.6) (-2.8,3.5) (-4.1,5.2)};
  \draw plot[smooth,tension=.7] coordinates {(-1.3,0) (-0.3,0.9) (3.4,0.8) (4.4,1.1)};
  \draw plot[smooth,tension=.7] coordinates {(1.5,0) (1.5,1.8) (3,3.3)};
  \draw plot[smooth,tension=.7] coordinates {(-0.4,0) (-0.5,2.3) (0.5,4.2) (0.5,5.4)};
  \draw plot[smooth,tension=.7] coordinates {(0.8,0) (0.6,2.5) (-0.6,4) (-1,5.2)};
  \draw (-4.8,0)--(4.9,0);
  \node[below] at (-3.5,0) {$x_1$};
  \node[below] at (3.8,0) {$y_1$};
  \node[below] at (-2.1,0) {$x_2$};
  \node[below] at (2.6,0) {$y_2$};
  \node[below] at (-1.3,0) {$x_3$};
  \node[below] at (1.5,0) {$y_3$};
  \node[below] at (-0.4,0) {$x_4$};
  \node[below] at (0.8,0) {$y_4$};
\end{tikzpicture}\hskip .8cm
\begin{tikzpicture}[scale=1.3,baseline=-2.4cm]
  \tikzstyle{every node}=[circle,draw,fill,minimum size=3pt,inner sep=0pt]
  \tikzstyle{every label}=[rectangle,draw=none,fill=none,inner sep=3pt]
  \node[label=above:{$(x_1,y_1)$}] (a) at (60:1cm) {};
  \node[label=left:{$(x_2,y_2)$}] (b) at (0,0) {};
  \node[label=right:{$(x_3,y_3)$}] (c) at (1,0) {};
  \node[label=below:{$(x_4,y_4)$}] (d) at (-60:1cm) {};
  \path (a) edge (b) edge (c) (b) edge (d);
\end{tikzpicture}
\caption{A tree-configuration $\bigl((x_1,y_1),(x_2,y_2),(x_3,y_3),(x_4,y_4)\bigr)$ and its unique witness}
\label{fig:tree-configuration}
\end{figure}

\begin{lemma}
\label{lem:tree-configuration-clique}
Every tree-configuration of length\/ $2^{k-1}$ contains a clique of size\/ $k+1$.
\end{lemma}

\begin{proof}
Let $\bigl((x_1,y_1),\ldots,(x_n,y_n)\bigr)$ be a tree-configuration with $n=2^{k-1}$, and let $T$ be its witness.
Since every binary tree with height less than $k$ has fewer than $2^{k-1}$ nodes, the height of $T$ is at least $k$.
In particular, there are indices $i_1,\ldots,i_k$ with $1=i_1\leq\cdots\leq i_k\leq n$ such that $(x_{i_{r+1}},y_{i_{r+1}})$ is a child of $(x_{i_r},y_{i_r})$ for $1\leq r<k$.
A clique of size $k+1$ arises by taking $x_{i_k}$, $y_{i_k}$, and either $x_{i_r}$ or $y_{i_r}$ for $1\leq r<k$ according to the following rule:
\begin{itemize}
\item if $(x_{i_{r+1}},y_{i_{r+1}})$ is the left child of $(x_{i_r},y_{i_r})$, then take $x_{i_r}$,
\item if $(x_{i_{r+1}},y_{i_{r+1}})$ is the right child of $(x_{i_r},y_{i_r})$, then take $y_{i_r}$.
\end{itemize}
The definition of a tree-configuration guarantees that the respective $x_{i_r}$ or $y_{i_r}$ (whichever is taken) intersects all $x_{i_{r+1}},y_{i_{r+1}},\ldots,x_{i_k},y_{i_k}$.
\end{proof}

The proof of Lemma~\ref{lem:main} proceeds by inductive construction of a tree-configuration of length $2^{k-1}$ in a $\xi$-family of sufficiently large chromatic number, which then contains a clique of size $k+1$, by Lemma~\ref{lem:tree-configuration-clique}.
First, we present a way of extending a tree-configuration by one pair of curves.

An \emph{attachment point} of a binary tree $T$ is a pair $(u,d)$ such that $u$ is a node of $T$ and $d\in\{$left$,{}$right$\}$ is a direction in which $u$ has no child.
If $(u,d)$ is an attachment point of $T$, then a new node can be added to $T$ becoming a child of $u$ in direction $d$.
A \emph{left attachment point} for a node $v$ of $T$ is an attachment point $(u,d)$ such that $(u,d)=(v,{}$left$)$ or $u$ is a left descendant of $v$, and a \emph{right attachment point} for $v$ is an attachment point $(u,d)$ such that $(u,d)=(v,{}$right$)$ or $u$ is a right descendant of $v$.
Thus, a new node added to $T$ at a left or right attachment point for $v$ becomes a left or right descendant of $v$, respectively.
A binary tree with $n$ nodes has exactly $n+1$ attachment points.

Let $T$ be a witness of a tree-configuration $\bigl((x_1,y_1),\ldots,(x_n,y_n)\bigr)$, and let $a$ be an attachment point of $T$.
A grounded curve $c$ is \emph{valid} for $a$ if $x_n\prec c\prec y_n$, $c$ intersects $x_i$ for every node $(x_i,y_i)$ of $T$ whose left attachment point is $a$, and $c$ intersects $y_i$ for every node $(x_i,y_i)$ of $T$ whose right attachment point is $a$.
Any pair of intersecting grounded curves that are valid for the same attachment point of $T$ can be added to the end of $\bigl((x_1,y_1),\ldots,(x_n,y_n)\bigr)$ to form a tree-configuration of length $n+1$ whose witness is obtained from $T$ by adding that pair as a new node at that attachment point.

\begin{lemma}
\label{lem:attachment}
If\/ $T$ is a witness of a tree-configuration\/ $\bigl((x_1,y_1),\ldots,(x_n,y_n)\bigr)$ and\/ $\gamma$ is a cap-curve such that\/ $x_1,y_1,\ldots,x_n,y_n\subset\int\gamma$, then the following statements hold.
\begin{enumerate}
\item\label{item:valid} Every grounded curve\/ $c$ with\/ $x_n\prec c\prec y_n$ that intersects\/ $\gamma$ is valid for at least one attachment point of\/ $T$.
\item\label{item:between} For any pairwise disjoint grounded curves\/ $c_1$, $c_2$, and\/ $c$ with\/ $x_n\prec c_1\prec c\prec c_2\prec y_n$ that intersect\/ $\gamma$, if\/ $c_1$ and\/ $c_2$ are valid for an attachment point\/ $a$ of\/ $T$, then\/ $c$ is also valid for\/ $a$.
\end{enumerate}
\end{lemma}

\begin{proof}
For $1\leq i\leq n$, since $x_i$ and $y_i$ intersect, there is a cap-curve $\nu_i\subseteq x_i\cup y_i$ connecting the basepoints of $x_i$ and $y_i$; it follows that $\gamma\subset\ext\nu_i$ while the basepoint of $c$ lies in $\int\nu_i$, so $c$ intersects $x_i$ or $y_i$.
We find an attachment point $(u,d)$ of $T$ such that $c$ is valid for $(u,d)$ as follows.
We start from the root $(x_1,y_1)$ and repeatedly move to the left child of the current node $(x_i,y_i)$ if $c$ intersects $x_i$ or to the right child if $c$ intersects $y_i$ (choosing any child if $c$ intersects both $x_i$ and $y_i$) until the current node has no child in the requested direction.
In the latter case, we let $u$ be the current node and $d$ be the requested direction.

Now, suppose $c_1$ and $c_2$ intersect a curve $z$ contained in $\int\gamma$.
It follows that there is a cap-curve $\nu\subseteq c_1\cup z\cup c_2$ connecting the basepoints of $c_1$ and $c_2$.
If $c_1\prec c\prec c_2$, then the basepoint of $c$ lies in $\int\nu$, while $\gamma\subset\ext\nu$ (as $c_1,z,c_2\subset\int\gamma$), so $c$ intersects $z$.
This observation applied to every $z\in\{x_1,y_1,\ldots,x_n,y_n\}$ yields the second statement.
\end{proof}

\begin{lemma}
\label{lem:tree-configuration}
There is a function\/ $h\colon\setN\to\setN$ with\/ $h(n)=\smash[t]{2^{O(kn^2)}}\xi^{n-1}$ such that for every\/ $n\in\setN$, every\/ $\xi$-family\/ $\famF$ with\/ $\omega(\famF)\leq k$ and\/ $\chi(\famF)>h(n)$ contains a tree-configuration of length\/ $n$.
\end{lemma}

\begin{proof}
We define the function $h$ by induction, as follows.
We set $h(1)=1$; if $\chi(\famF)>1$, then $\famF$ contains two intersecting curves, which form a tree-configuration of length $1$.
For the induction step, fix $n\geq 1$, and assume that $h(n)$ is defined so that every $\xi$-family $\famH$ with $\chi(\famH)>h(n)$ contains a tree-configuration of length $n$.
Let $g$ be the function claimed by Lemma~\ref{lem:skeleton}.
Let
\begin{equation*}
\alpha=h(n),\qquad\beta=2(n+1)\tbinom{n+2}{2}\xi+2(n+2)\xi,\qquad h(n+1)=\smash[t]{g^{(n+2)}}(2\alpha(\beta+1)),
\end{equation*}
where $\smash[t]{g^{(m)}}$ denotes the $m$-fold composition of $g$.
Let $\famF$ be a $\xi$-family with $\omega(\famF)\leq k$ and $\chi(\famF)>h(n+1)$.
We claim that $\famF$ contains a tree-configuration of length $n+1$.

Let $\famF_0=\famF$.
Lemma~\ref{lem:skeleton} applied $n+2$ times provides families $\famF_1,\ldots,\famF_{n+2}$ and skeletons $(\gamma_1,\famU_1),\ldots,(\gamma_{n+2},\famU_{n+2})$ with the following properties:
\begin{itemize}
\item $\famF=\famF_0\supseteq\famF_1\supseteq\cdots\supseteq\famF_{n+2}$,
\item for $1\leq i\leq n+2$, $(\gamma_i,\famU_i)$ is an $\famF_{i-1}$-skeleton supporting $\famF_i$,
\item for $1\leq i\leq n+2$, we have $\chi(\famF_i)>\smash[t]{g^{(n+2-i)}}(2\alpha(\beta+1))$.
\end{itemize}
In particular, by the last condition, we have $\chi(\famF_{n+2})>2\alpha(\beta+1)$.
Therefore, by Lemma~\ref{lem:mcguinness}, there is a subfamily $\famH\subseteq\famF_{n+2}$ such that $\chi(\famH)>\alpha$ and $\chi(\famF_{n+2}(x,y))>\beta$ for any two intersecting curves $x,y\in\famH$.
Since $\chi(\famH)>\alpha$, there is a tree-configuration $\bigl((x_1,y_1),\ldots,(x_n,y_n)\bigr)$ of length $n$ in $\famH$.
Let $x=x_n$ and $y=y_n$.
Thus $\chi(\famF_{n+2}(x,y))>\beta=2(n+1)\binom{n+2}{2}\xi+2(n+2)\xi$.

Let $\famG$ be the family of curves in $\famF_{n+2}(x,y)$ that intersect some curve in $\famU_i(x,y)$ for every $i$ with $1\leq i\leq n+2$.
If a curve $c\in\famF_{n+2}(x,y)$ intersects no curve in $\famU_i(x,y)$, where $1\leq i\leq n+2$, then $c$ intersects the curve in $\famU_i$ with rightmost basepoint to the left of the basepoint of $x$ (if such a curve exists) or the curve in $\famU_i$ with leftmost basepoint to the right of the basepoint of $y$ (if such a curve exists).
This gives at most $2(n+2)$ curves such that every curve in $\famF_{n+2}(x,y)\setminus\famG$ intersects at least one of them.
The fact that $\famF$ is a $\xi$-family implies $\chi(\famF_{n+2}(x,y)\setminus\famG)\leq 2(n+2)\xi$ and thus $\chi(\famG)\geq\chi(\famF_{n+2}(x,y))-2(n+2)\xi>2(n+1)\binom{n+2}{2}\xi$.

Let $T$ be a witness of the tree-configuration $\bigl((x_1,y_1),\ldots,(x_n,y_n)\bigr)$.
For $1\leq i\leq n+2$, let $\famU_i^a$ be the curves in $\famU_i(x,y)$ that are valid for an attachment point $a$ of $T$.
Thus $\famU_i(x,y)=\bigcup_a\famU_i^a$, by Lemma \ref{lem:attachment}~\ref{item:valid}.
For a triple $\sigma=(i,j,a)$ such that $1\leq i<j\leq n+2$ and $a$ is an attachment point of $T$, let $\famG_\sigma$ be the curves in $\famG$ that intersect a curve in $\famU_i^a$ and a curve in $\smash[b]{\famU_j^a}$.
By the pigeonhole principle, since $T$ has $n+1$ attachment points, we have $\famG=\bigcup_\sigma\famG_\sigma$, and since there are $(n+1)\binom{n+2}{2}$ distinct triples $\sigma$ and $\chi(\famG)>2(n+1)\binom{n+2}{2}\xi$, there is a triple $\sigma=(i,j,a)$ such that $\chi(\famG_\sigma)>2\xi$.

\begin{figure}[t]
\begin{tikzpicture}[scale=.9,yscale=.8]
  \useasboundingbox (0,-0.7) rectangle (15,6.6);
  \coordinate (a0) at (0.6,0);
  \coordinate (a1) at (1.6,5);
  \coordinate (a2) at (5.8,6.2);
  \coordinate (a3) at (13,5.6);
  \coordinate (a4) at (14.4,0);
  \coordinate (b0) at (1.6,0);
  \coordinate (b1) at (2.5,4.3);
  \coordinate (b2) at (6.9,4.2);
  \coordinate (b3) at (12.1,4.7);
  \coordinate (b4) at (13.4,0);
  \coordinate (u0) at (4.8,0);
  \coordinate (u1) at (4.6,3);
  \coordinate (u2) at (3.9,5.6);
  \coordinate (u3) at (5.1,7);
  \coordinate (w0) at (10.3,0);
  \coordinate (w1) at (10.2,2.2);
  \coordinate (w2) at (9.6,4);
  \coordinate (w3) at (10,5.9);
  \coordinate (w4) at (8.8,7);
  \fill[red!5] plot[smooth,tension=.7] coordinates {(a0) (a1) (a2) (a3) (a4)};
  \fill[blue!10] plot[smooth,tension=.7] coordinates {(b0) (b1) (b2) (b3) (b4)};
  \begin{scope}
  \clip plot[smooth,tension=.7] coordinates {(a0) (a1) (a2) (a3) (a4)}--cycle;
  \fill[opacity=.1] plot[smooth,tension=.7] coordinates {(u0) (u1) (u2) (u3) (w4) (w3) (w2) (w1) (w0)}--cycle;
  \end{scope}
  \draw[name path=a,dotted,red] plot[smooth,tension=.7] coordinates {(a0) (a1) (a2) (a3) (a4)};
  \draw[name path=b,dotted,blue] plot[smooth,tension=.7] coordinates {(b0) (b1) (b2) (b3) (b4)};
  \path[name path=uw] plot[smooth,tension=.7] coordinates {(u0) (u1) (u2) (u3) (w4) (w3) (w2) (w1) (w0)};
  \draw[red,intersection segments={of=uw and a,sequence=L1}];
  \draw[red,intersection segments={of=uw and a,sequence=L3}];
  \path[decorate,decoration={markings,mark=at position .5 with {\coordinate (v4);}}] plot[smooth,tension=.7] coordinates {(a0) (a1) (a2) (a3) (a4)};
  \coordinate (v0) at (8.6,0);
  \draw[red] plot[smooth,tension=.7] coordinates {(v0) (8.5,1.7) (8,3.4) (7.6,4.8) (v4)};
  \path[name path=s] plot[smooth,tension=.7] coordinates {(6.1,0) (6.3,2.1) (5.6,4) (6.7,5.2) (8.6,5.4)};
  \draw[dashed,intersection segments={of=s and b,sequence=L2}];
  \draw[blue,intersection segments={of=s and b,sequence=L1}];
  \draw plot[smooth,tension=.7] coordinates {(2.7,0) (2.9,1.9) (3.5,3.8)};
  \coordinate (x0) at (3.6,0);
  \draw plot[smooth,tension=.7] coordinates {(x0) (4.6,1.7) (8.7,2.5) (11.7,3.6)};
  \coordinate (c0) at (7.3,0);
  \draw plot[smooth,tension=.7] coordinates {(c0) (7.1,1.2) (5.7,1.2)};
  \coordinate (y0) at (11.6,0);
  \draw plot[smooth,tension=.7] coordinates {(y0) (11.4,2) (10.7,4.1)};
  \draw plot[smooth,tension=.7] coordinates {(12.5,0) (12.1,1.5) (10.3,1.8) (8.1,3.2) (4.2,2.8) (2.7,3.5)};
  \draw (0,0)--(15,0);
  \node at (4.9,5.4) {$K$};
  \node[below] at (x0) {$x$};
  \node[below] at (u0) {$u_L$};
  \node[below] at (c0) {$c$};
  \node[below] at (v0) {$u$};
  \node[below] at (w0) {$u_R$};
  \node[below] at (y0) {$y$};
  \path[decorate,decoration={pre length=5pt,markings,mark=at position .1 with {\node[left] {$u'$};}},intersection segments={of=s and b,sequence=L1}];
  \path[decorate,decoration={markings,mark=at position .2 with {\node[above left,yshift=-3pt] {$s'$};}},intersection segments={of=s and b,sequence=L2}];
  \path[decorate,decoration={markings,mark=at position .8 with {\node[above right] {$\gamma_i$};}}] plot[smooth,tension=.7] coordinates {(a0) (a1) (a2) (a3) (a4)};
  \path[decorate,decoration={markings,mark=at position .81 with {\node[above right,xshift=-1pt] {$\gamma_j$};}}] plot[smooth,tension=.7] coordinates {(b0) (b1) (b2) (b3) (b4)};
\end{tikzpicture}
\caption{Illustration for the final part of the proof of Lemma~\ref{lem:tree-configuration}}
\label{fig:final-proof}
\end{figure}

The rest of the argument is illustrated in Figure~\ref{fig:final-proof}.
Let $u_L$ and $u_R$ be the curves in $\famU_i^a$ with leftmost and rightmost basepoints, respectively.
Every curve in $\famU_i^a$ lies in the closed region $K$ bounded by $u_L$, $u_R$, the segment of the baseline between the basepoints of $u_L$ and $u_R$, and the part of $\gamma_i$ between its intersection points with $u_L$ and $u_R$.
Since $\famF$ is a $\xi$-family, the curves in $\famG_\sigma$ that intersect $u_L$ have chromatic number at most $\xi$, and so do the curves in $\famG_\sigma$ that intersect $u_R$.
Since $\chi(\famG_\sigma)>2\xi$, there is a curve $c\in\famG_\sigma$ that intersects neither $u_L$ nor $u_R$.
This and the fact that $c$ intersects some curve in $\famU_i^a$ implies $c\subset K$.
The curve $c$ also intersects some curve $u'\in\smash[b]{\famU_j^a}$, which is a subcurve of some curve $s'\in\famF_{j-1}$ valid for $a$.
The facts that $c\subset K$ and $s'\subset\int\gamma_i$ imply $s'\subset K$ unless $s'$ intersects $u_L$ or $u_R$.
Since $s'\in\famF_{j-1}\subseteq\famF_i$ and $\famF_i$ is supported by $(\gamma_i,\famU_i)$, the curve $s'$ intersects some curve $u\in\famU_i$, and by the above, $u$ can be chosen so that $u_L\preceq u\preceq u_R$.
Lemma \ref{lem:attachment}~\ref{item:between} implies that $u$ is also valid for $a$.
The curve $u$ is a subcurve of some curve $s\in\famF_{i-1}$ valid for $a$.
Since $s$ and $s'$ intersect and are both valid for $a$, they can be used to extend the tree-configuration $\bigl((x_1,y_1),\ldots,(x_n,y_n)\bigr)$.
Specifically, if $(x_{n+1},y_{n+1})=(s,s')$ or $(x_{n+1},y_{n+1})=(s',s)$ so that $x_{n+1}\prec y_{n+1}$, then $\bigl((x_1,y_1),\ldots,(x_{n+1},y_{n+1})\bigr)$ is a tree-configuration of length $n+1$ in $\famF$.

It remains to derive the claimed bound on $h$.
By Lemma~\ref{lem:skeleton}, we have $g(\alpha)=\smash[t]{2^{O(k)}}(\xi+\alpha)$, which yields $\smash[t]{g^{(n+2)}}(\alpha)=\smash[t]{2^{O(kn)}}(\xi+\alpha)$.
This yields the following, for $n\geq 1$:
\begin{equation*}
h(n+1)=\smash[t]{g^{(n+2)}}\bigl(\operatorname{poly}(n)\xi h(n)\bigr)=\smash[t]{2^{O(kn)}}\xi+\smash[t]{2^{O(kn)}}\xi h(n)=\smash[t]{2^{O(kn)}}\xi h(n),
\end{equation*}
where the last bound follows from $h(n)\geq 1$.
This and $h(1)=1$ yield $h(n)=\smash[t]{2^{O(kn^2)}}\xi^{n-1}$.
\end{proof}

\begin{proof}[Proof of Lemma~\ref{lem:main}]
Let $\zeta=h(2^{k-1})$, where $h$ is the function claimed by Lemma~\ref{lem:tree-configuration}.
It follows that every $\xi$-family $\famF$ with $\chi(\famF)>\zeta$ contains a tree-configuration of length $2^{k-1}$ and thus, by Lemma~\ref{lem:tree-configuration-clique}, a clique of size $k+1$, which is not possible when $\omega(\famF)\leq k$.
Therefore, every $\xi$-family $\famF$ with $\omega(\famF)\leq k$ satisfies $\chi(\famF)\leq\zeta$.
The bound on $h$ from Lemma~\ref{lem:tree-configuration} immediately yields the claimed bound on $\zeta$.
\end{proof}

\end{document}